\date{}
\newtheorem{theorem}{Theorem}[section]
\newtheorem{lemma}[theorem]{Lemma}
\newtheorem{example}[theorem]{Example}
\newtheorem{remark}[theorem]{Remark}
\newtheorem{definition}[theorem]{Definition}
\begin{document}

\title{Remarks on mass transportation minimizing expectation of a minimum of affine functions}
\author{Alexander V. Kolesnikov and Nikolay Lysenko
\footnote{{
This study was supported by the RFBR project 17-01-00662 and the DFG project RO 1195/12-1.
The article was prepared within the framework of the Academic Fund Program at the National Research University
Higher School of Economics (HSE) in 2017--2018 (Grant No 17-01-0102) and by the Russian Academic Excellence Project ``5-100''.
}}}

\maketitle

\begin{abstract}
We study the Monge--Kantorovich problem with one-dimensional marginals $\mu$ and $\nu$ and 
the cost function $c = \min\{l_1, \ldots, l_n\}$
that equals the minimum of a finite number $n$ of affine functions $l_i$ 
satisfying certain non-degeneracy assumptions. We prove that the problem
is equivalent to a finite-dimensional extremal problem.  More precisely, it is shown that the solution is concentrated
on the union of $n$ products $I_i \times J_i$, where $\{I_i\}$ and $\{J_i\}$
are partitions of the real line into unions of disjoint connected sets.
The families of sets $\{I_i\}$ and $\{J_i\}$ have the following properties:   1)  $c=l_i$ on $I_i \times J_i$,
2) $\{I_i\}, \{J_i\}$ is  a couple of partitions solving an auxiliary $n$-dimensional extremal problem.
The result is partially generalized to the case of more than two marginals.

\vspace{5pt}

\noindent \textit{Keywords:} {Monge--Kantorovich problem, concave cost functions}.
\end{abstract}

\section{Introduction}

Suppose we are given a couple of probability distributions $\mu, \nu$
on the real line that are assumed to be atomless and a Borel function $c \colon \mathbb{R}^2 \to \mathbb{R}$.
Denote by $\Pi(\mu,\nu)$ the set of Borel probability measures on $\mathbb{R} \times \mathbb{R}$
with marginals $\mu,\nu$.
Recall (see, e.g., \cite{BK}, \cite{B}, and~\cite{V})
that a measure $\pi\in \Pi(\mu,\nu)$ is a solution to the Monge--Kantorovich problem
if it gives the minimum to the functional $$\pi \mapsto \int c d \pi$$ on $\Pi(\mu,\nu)$:
\begin{equation}
\label{MK}
 \int c(x,y) d \pi \to \min, \ \pi \in \Pi(\mu,\nu).
\end{equation}

It is a classical and well-known fact that for a broad class of convex functions, such as, for instance,
$c=h(|x-y|)$ with a strictly convex function $h$, any solution to (\ref{MK}) is concentrated on the graph
of a non-decreasing function. The assumption of convexity of $c$ is standard
for many core results of the transportation theory. The case of the quadratic cost function $c=|x-y|^2$
is of particular interest.

In general, the Monge--Kantorovich problem with a concave cost $c$
is harder. Remarkably, solutions to (\ref{MK}) with concave $c$ have 
a completely different structure as compared to solutions for convex costs.
For instance, the corresponding optimal transportation mapping need not exist even for a strictly concave cost $c$.
The case of $c=h(|x-y|)$ with a strictly concave function $h$ has been studied in \cite{GangboMcCann},
where a general result on the existence of optimal transportation mappings has 
been established (see more recent developments in \cite{PPS}).
An exact solution in the one-dimensional case for $c=h(|x-y|)$ has been obtained in \cite{McCann}.
An algorithm to solve the discrete transportation problem with a concave cost has been proposed in \cite{DSS}.

We study problem (\ref{MK}) for the cost
$$
c = \min\{l_1, \l_2, \ldots, l_n\},
$$
$$
l_i = a_i x + b_i y + c_i.
$$
This problem, yet quite specific, is of particular interest, since the minima of affine functions 
are dense in the set of all concave functions.
It turns out that solutions have a  nice and relatively simple structure provided that 
the functions $l_i$ satisfy certain non-degeneracy assumption.
 In particular, problem (\ref{MK}) can be reduced to a finite-dimensional
optimization problem. The corresponding optimal transportation problem admits a non-unique solution.
Some of our results are generalized  for the case of $m \ge 2$ one-dimensional marginals. 
In particular, we find a complete  characterization
of the solution for the cost function
$$
c = \min\{x_1, x_2, \ldots, x_m\}
$$
(the minimum of coordinate functions).

We emphasize that nowadays the multi-marginal transportation problem (in particular, with one-dimensional marginals) 
is attracting  attention of many researchers
(see the recent survey \cite{Pass} about general results and particular examples). 
Both the concave and the multi-marginal transportation problems
have potential applications in economics (see \cite{McCann} and \cite{Pass}).

\section{Results}

\begin{definition}
We say that a couple of distinct affine functions $l_1, l_2$
satisfies the non-degeneracy assumption {\bf (A)} if
the set
$$
\Gamma_{1,2} = \{l_1=l_2\} \ne \emptyset
$$
is not  parallel to one of the axes.
\end{definition}

\begin{example}
The assumption {\bf(A)} is violated for
$c(x,y)=\min(x,x+y)$. Since $c(x,y) = x+\min(0,y)$,
 the corresponding problem is degenerate and every $\pi \in \Pi(\mu,\nu)$
is optimal.
\end{example}

\begin{definition}
Let $l_1, l_2$ be a couple of affine functions satisfying {\bf(A)} and let
$M = (x_0,y_0)$ be a point that belongs to $\Gamma_{1,2} = \{l_1=l_2\}$.
Let $Q_{M,l_1,l_2}$ be that one of the sets
$$
\Bigl\{ x \le x_0, y \ge y_0 \Bigr\} \cup \Bigl\{ x \ge x_0, y \le y_0 \Bigr\},
$$
$$
\Bigl\{ x \le x_0, y \le y_0 \Bigr\} \cup \Bigl\{ x \ge x_0, y \ge y_0 \Bigr\}
$$
which does not contain $\Gamma_{1,2}$. More precisely,  $Q_{M,l_1,l_2}$ is defined by the following condition:
$$
\Gamma_{1,2} \cap Q_{M,l_1,l_2} = M.
 $$
\end{definition}

\begin{definition}
Given a Borel cost function  $c\colon \mathbb{R} \times \mathbb{R} \to \mathbb{R} \cup \{\infty\}$,
 we say that  a subset $S \subset \mathbb{R} \times \mathbb{R}$ is  
 $c$-cyclically monotone(or simply cyclically monotone) if, for every non-empty sequence of 
 its elements $(x_1, y_1),\ldots, (x_n, y_n)$, the following inequality holds:
\begin{equation}
c(x_1, y_1) + c(x_2, y_2) + \cdots + c(x_n, y_n) \le c(x_1, y_n) + c(x_2, y_1) + \cdots + c(x_n, y_{n-1}).
\end{equation}
\end{definition}

It is known that for a broad class of cost functions any solution $\pi$ to (\ref{MK})
satisfies $\pi(S)=1$ for some cyclically monotone set $S$.

The following lemma is a version of the so-called  ``no-crossing rule'' (see \cite{McCann}).

\begin{lemma}
\label{l12lemma}
Let
$
l_1, l_2
$ be affine functions satisfying {\bf(A)} and let
$\pi$ be a solution to the Monge--Kantorovich problem (\ref{MK})
with
$$
c = \min(l_1, l_2).
$$
Then there exists a point $M \in \{l_1=l_2\}$ such that
the support of $\pi$ is contained in
$
Q_{M,l_1,l_2}.
$
\end{lemma}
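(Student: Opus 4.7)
The plan is to extract $M$ from the $c$-cyclic monotonicity of $\operatorname{supp}(\pi)$, which holds for any optimizer of (\ref{MK}) by standard transport theory. I would set $\alpha = a_1 - a_2$, $\beta = b_1 - b_2$, $\gamma = c_1 - c_2$, so that $\Gamma_{1,2} = \{\alpha x + \beta y + \gamma = 0\}$ and, by assumption \textbf{(A)}, $\alpha \neq 0$ and $\beta \neq 0$. Partition the support as $\operatorname{supp}(\pi) = A' \cup B'$, where $A' = \operatorname{supp}(\pi) \cap \{l_1 \le l_2\}$ and $B' = \operatorname{supp}(\pi) \cap \{l_1 \ge l_2\}$; on $A'$ the cost equals $l_1$, on $B'$ it equals $l_2$, and the two pieces intersect only on $\Gamma_{1,2}$.

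The next step is to apply the $2$-point cyclic inequality in three configurations. For $(x_1, y_1), (x_2, y_2) \in A'$, affinity of $l_1$ gives $l_1(x_1, y_1) + l_1(x_2, y_2) = l_1(x_1, y_2) + l_1(x_2, y_1) \ge c(x_1, y_2) + c(x_2, y_1)$, and the cyclic inequality forces equality, so the ``swap'' $(x_1, y_2), (x_2, y_1)$ belongs to $\{c = l_1\} = \overline{A}$; symmetrically, the swap of any pair in $B'$ lies in $\overline{B}$. For a mixed pair $(x_1, y_1) \in A'$ and $(x_2, y_2) \in B'$, using $c \le l_1$ and $c \le l_2$ at the two swapped points in two different ways and simplifying via affinity yields the two inequalities $\alpha(x_1 - x_2) \le 0$ and $\beta(y_1 - y_2) \le 0$.

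After possibly interchanging $l_1 \leftrightarrow l_2$ or reflecting an axis I may assume $\alpha, \beta > 0$; then $\Gamma_{1,2}$ has negative slope and $Q_{M, l_1, l_2} = \{x \le x_0, y \le y_0\} \cup \{x \ge x_0, y \ge y_0\}$. Setting $x^* = \sup_{A'} x$, $y^* = \sup_{A'} y$, $x_{**} = \inf_{B'} x$, $y_{**} = \inf_{B'} y$, the mixed inequality yields $x^* \le x_{**}$ and $y^* \le y_{**}$, and passing to the limit in the within-cluster rule (applied to independent sequences approaching the relevant sup or inf) gives $\alpha x^* + \beta y^* + \gamma \le 0 \le \alpha x_{**} + \beta y_{**} + \gamma$. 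By the intermediate value theorem applied to the linear function $\alpha x + \beta y + \gamma$ along the segment from $(x^*, y^*)$ to $(x_{**}, y_{**})$, there is a point $M = (x_0, y_0) \in \Gamma_{1,2}$ with $x^* \le x_0 \le x_{**}$ and $y^* \le y_0 \le y_{**}$. Then $A' \subset \{x \le x_0, y \le y_0\}$ and $B' \subset \{x \ge x_0, y \ge y_0\}$, giving $\operatorname{supp}(\pi) \subset Q_{M, l_1, l_2}$.

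The main delicate point will be the degenerate case where one of $A'$ or $B'$ is empty, in which some extremum is $\pm\infty$ and the segment argument collapses; there the within-cluster rule on the non-empty side still controls its extremal corner, and because $\Gamma_{1,2}$ is a full line not parallel to either axis one can still place $M$ on $\Gamma_{1,2}$ so that the single non-empty cluster fits into the appropriate quadrant of $Q_{M, l_1, l_2}$. A minor auxiliary check is that at most one point of $\operatorname{supp}(\pi)$ can lie on $\Gamma_{1,2}$, which follows from the equality $x^* = x_{**}$, $y^* = y_{**}$ that such a point would force.
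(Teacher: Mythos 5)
Your proof is correct, and it reaches the conclusion of Lemma \ref{l12lemma} by a genuinely different route than the paper. The paper first normalizes the cost to the canonical form $c=\min(x,by)$ (shifting so that $\Gamma_{1,2}$ passes through the origin and subtracting an affine function, which is harmless because the marginals are fixed), notes that two support points with $\max(x_1,by_1)<\min(x_2,by_2)$ violate the two-point $c$-monotonicity inequality, and then locates the splitting point $M=(s,s/b)$ by a quantile condition on the marginals, $\mu(-\infty,s)=\nu(s/b,+\infty)$: a support point strictly inside a forbidden quadrant would, by mass balance, force a second support point in the opposite forbidden quadrant and contradict monotonicity. You instead keep general coefficients, split the support into the clusters $\{c=l_1\}$ and $\{c=l_2\}$, and extract from the same two-point inequality both a coordinatewise ordering between the clusters and a one-sided position of each cluster's extremal corner relative to $\Gamma_{1,2}$; the point $M$ is then produced purely from the geometry of the support via the intermediate value theorem, using the marginals only through the standard fact (also invoked by the paper) that the support of an optimizer is $c$-cyclically monotone. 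Your version handles the degenerate case (one cluster empty) explicitly, which the paper's quantile construction absorbs automatically; the paper's construction has the advantage of exhibiting $M$ concretely in terms of $\mu,\nu$. Two small points you should write out: (i) when both clusters are non-empty, finiteness of $x^*,y^*,x_{**},y_{**}$ already follows from the cross-cluster ordering ($-\infty<x^*\le x_{**}<+\infty$ since $A'$ and $B'$ are non-empty), while in the degenerate case it follows from your within-cluster rule (e.g.\ $\alpha x_2+\beta y_1+\gamma\le 0$ bounds $x_2$ from above once one point of $A'$ is fixed); (ii) the reduction to $\alpha,\beta>0$ requires the axis reflection and not merely swapping $l_1\leftrightarrow l_2$ when $\alpha\beta<0$, since swapping flips both signs — you note this, and it is a legitimate symmetry of the problem that also exchanges the two candidate shapes of $Q_{M,l_1,l_2}$.
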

\begin{proof}
It is clear that shifting the coordinates $x \to x - x_0, y \to y - y_0$ we can deal from the very beginning with
linear functions $l_1, l_2$. In particular, the origin belongs to $\Gamma_{1,2}$. In addition, since the marginals are fixed,
the assertion is invariant with respect to subtracting a linear function $l$: in place of $c$ one can deal with
$$
 c - l = \min(l_1 - l, l_2 -l).
$$
Passing to this situation if necessary, we can deal with the case
$c = \min(ax,by)$ for some $a \ne 0, b \ne 0$. Multiplying by a constant, we reduce the problem
to the case $c = \min(x, by)$. Let $b>0$ ($b<0$ can be considered similarly).

Note that every two-points set $\{(x_1,y_1), (x_2,y_2)\}$
satisfying
$$
\max(x_1, b y_1) < \min(x_2, b y_2)
$$
is not cyclically monotone.
Indeed, this can be easily verified by direct computations:
$$
\min(x_1,b y_1) + \min(x_2,by_2)  > x_1 + \min(x_2, by_1) \ge \min(x_1,b y_2) + \min(x_2, by_1).
$$
Now let us find the smallest number $s$ such that
\begin{equation}
\label{spoint}
\mu(-\infty,s) = \nu(s/b, + \infty).
\end{equation}
We claim that the measure $\pi$ is concentrated  on the set
$$Q_{(s,bs), x, by} = \Bigl\{ x \le s, y \ge s/b \Bigr\} \cup \Bigl\{ x \ge s, y \le s/b \Bigr\}.$$
Assume the contrary and find a point $(x_1,y_1)$ from the support of $\pi$ such that, say,
$$
x_1 <s, y_1< s/b.
$$
Then (\ref{spoint})  implies that there exists another point $(x_2,y_2)$ from the support of $\pi$ such that
$$
x_2 >s, y_2 > s/b.
$$
But this means that $\max(x_1, b y_2) < s < \min(x_2, by_2)$, hence $c$-monotonicity is violated.
\end{proof}

\begin{definition}
We say that we are given a $(\mu,\nu)$-partition of order $n$  if
\begin{enumerate}
\item
The $x$-{axis} and $y$-axis are represented as  unions of $n$ non-empty disjoint connected sets
$$\mathbb{R} \times \{0\} = I_1 \cup I_2 \ldots \cup I_n,$$
$$\{0\} \times \mathbb{R} = J_1 \cup J_2 \ldots \cup J_n.$$
\item
$$
\mu(I_i) = \nu(J_i)>0, \ \forall i \in \{1, 2, \ldots, n\}.
$$
\end{enumerate}
\end{definition}

Let us proceed to our first main result. 

\begin{theorem}
\label{partit-theorem}
Let $l_1, \ldots, l_n$ be $n$ affine functions
such that every two of them satisfy assumption {\bf(A)},
and, moreover, every set $A_i = \{c=l_i \}$, $1 \le i \le n$ has a non-empty interior, where
$$
c = \min_{ 1 \le i \le n} (l_1, l_2, \ldots,l_n)
$$
is the corresponding cost function.

Then, for every solution $\pi$ to the Monge--Kantorovich problem (\ref{MK}),
there exists a $(\mu,\nu)$-partition of order $k \le n$  such that
\begin{itemize}
\item[1)]
$\pi$ is concentrated on $\cup_{i=1}^k I_i \times J_i$,
\item[2)]
for every $ 1 \le i \le k$
either $\pi(I_i \times J_i)=0$
or (after a suitable renumeration of the functions $l_i$)
$$
c = l_i
$$
on $I_i \times J_i$.
\end{itemize}
\end{theorem}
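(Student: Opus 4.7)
My strategy is to reduce the $n$-function problem to a family of two-function subproblems to which Lemma \ref{l12lemma} applies, and to assemble the resulting pairwise no-crossing conditions into the global partition of the axes.

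I would first set $A_i = \{c = l_i\}$ and decompose $\pi = \sum_{i=1}^n \pi_i$ with $\pi_i = \pi|_{A_i}$, breaking ties on the boundaries arbitrarily; denote the marginals of $\pi_i$ by $\mu_i, \nu_i$. The first key step is to show that, for every pair $i \ne j$, the measure $\pi_i + \pi_j$ solves the Monge--Kantorovich problem with cost $\min(l_i, l_j)$ and marginals $\mu_i + \mu_j, \nu_i + \nu_j$. This will follow from a short substitution argument: $c \le \min(l_i, l_j)$ pointwise, with equality on $A_i \cup A_j$, so a strictly cheaper competitor $\tilde\pi$ in the reduced problem would, by replacing $\pi_i + \pi_j$ with $\tilde\pi$ inside $\pi$, produce a plan with strictly smaller $c$-cost than $\pi$, contradicting its optimality. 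Applying Lemma \ref{l12lemma} to each pair will then produce a separator $M_{ij} = (\alpha_{ij}, \beta_{ij}) \in \Gamma_{ij}$ such that $\mathrm{supp}(\pi_i + \pi_j) \subset Q_{M_{ij}, l_i, l_j}$; since $\mathrm{int}(A_i)$ and $\mathrm{int}(A_j)$ lie on opposite sides of $\Gamma_{ij}$, the supports of $\pi_i, \pi_j$ occupy opposite arms of the $L$-shape $Q_{M_{ij}}$, and $\alpha_{ij}, \beta_{ij}$ strictly separate their $x$- and $y$-projections respectively.

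Let $k$ be the number of active indices (those with $\pi_i \ne 0$); I will reindex them by the linear order of their $x$-projections. The pairwise separations will then force $\{\pi_x(\mathrm{supp}(\pi_i))\}_{i=1}^k$ to be disjoint connected intervals along the $x$-axis, and likewise $\{\pi_y(\mathrm{supp}(\pi_i))\}_{i=1}^k$ along the $y$-axis (in a generally permuted order, determined by the signs of the slopes of the $\Gamma_{ij}$'s). I will extend these to a covering partition $\{I_i\}, \{J_i\}$ by attaching intermediate gaps to neighbors; atomlessness of $\mu, \nu$ together with $\sum_i \pi_i(\mathbb{R}^2) = 1$ will yield the balance $\mu(I_i) = \pi_i(\mathbb{R}^2) = \nu(J_i) > 0$. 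Crucially, the same substitution argument also produces an $M_{ij}$ for every pair consisting of one active and one inactive index, and these inactive-pair separators pin $I_i \times J_i$ inside the quadrant of $M_{ij}$ corresponding to the side of $\Gamma_{ij}$ on which $A_i$ lies, for \emph{all} $j \ne i$. The intersection of these quadrants equals $\bigcap_{j \ne i}\{l_i \le l_j\} = A_i$, whence $c = l_i$ on $I_i \times J_i$.

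The main obstacle will be the bookkeeping in the assembly step: I must pick cut points on each axis so that $\pi_x(\mathrm{supp}(\pi_i))$ sits inside $I_i$ while $I_i$ simultaneously respects \emph{every} constraint of the form $I_i \subset \{x \le \alpha_{ij}\}$ or $\{x \ge \alpha_{ij}\}$, and analogously for $J_i$. This will reduce to verifying that the collection $\{\alpha_{ij}\}_j$ is consistent in the sense that a single interval $I_i$ realises all of them at once; such consistency ought to follow from the convexity of each $A_j$ together with the non-empty-interior hypothesis, but its verification requires care. Once the partition is established, the renumeration in the conclusion is merely the relabeling of the $l_i$'s to match the ordering of the $I_i$'s along the $x$-axis.
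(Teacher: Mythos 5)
Your proposal follows essentially the same route as the paper's proof: decompose $\pi$ according to the regions $A_i=\{c=l_i\}$, show by the same substitution argument that the restriction of $\pi$ to each pairwise region $\{c=\min(l_i,l_j)\}$ is optimal for the cost $\min(l_i,l_j)$ with its own marginals, apply Lemma \ref{l12lemma} to every pair, and intersect the resulting half-lines to get $I_i\times J_i\subset A_i$. The consistency/bookkeeping issue you flag at the end is handled in the paper simply by setting $I_i=\bigcap_{j\ne i}L^1_{ij}$ and $J_i=\bigcap_{j\ne i}M^1_{ij}$, which realises all the pairwise constraints at once and still contains the projections of the support of $\pi|_{A_i}$.
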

\begin{proof}
Let us fix $i$ and consider the set
$$
\Omega_i = \{c=l_i\}.
$$
Assume that $\pi(\Omega_i)>0$.
Then for every $j \ne i$ set
$$
h_{ij} = \min(l_i,l_j)
$$
and consider the restrictions $\pi_{ij} = \pi|_{\Omega_{ij}}$
to the set
$$
\Omega_{ij} = \{c=h\}.
$$
We claim that  $\pi_{ij}$ is optimal for the cost function $h_{ij}$
and the projections $\pi_{ij} \circ Pr^{-1}_X$, $\pi_{ij} \circ Pr^{-1}_Y$ onto the axes.
Indeed, assuming the contrary consider another measure
$$
\tilde{\pi}_{ij} = \pi|_{\Omega_{ij}^c} + \hat{\pi}_{ij},
$$
where $\hat{\pi}_{ij}$ is optimal for $h_{ij}$ and $\pi_{ij} \circ Pr^{-1}_X$, $\pi_{ij} \circ Pr^{-1}_Y$.
Using that $c \le h_{ij}$, $c=h_{ij}$ on $\Omega_{ij}$ and $\hat{\pi}_{ij}$ is optimal, we obtain 
$$
\int c d \tilde{\pi}_{ij} = \int c d \pi|_{\Omega_{ij}^c}  + \int c d  \hat{\pi}_{ij}
\le \int c d \pi|_{\Omega_{ij}^c}  + \int h_{ij} d  \hat{\pi}_{ij}
< \int c d \pi|_{\Omega_{ij}^c}  + \int h_{ij} d  {\pi}_{ij} = \int c d \pi.
$$
This contradicts the optimality of $\pi$.

Applying Lemma \ref{l12lemma} we obtain that the supports 
of $\pi|_{\Omega_i} = \pi_{ij}|_{\Omega_i}$ and $\pi|_{\Omega_j} = \pi_{ij}|_{\Omega_j}$
are contained in the sets $L^1_{ij} \times M^{1}_{ij}$, $L^{2}_{ij} \times M^{2}_{ij}$, 
respectively, where $L^1_{ij}$ and $L^2_{ij} = \mathbb{R} \setminus L^1_{ij}$ are disjoint 
and connected (the same is true for $M^1_{ij}, M^2_{ij}$).
 The $i$th intervals of the desired $(\mu,\nu)$-partition are defined as follows:
 $$
 I_i = \cap_{j \ne i} L^{1}_{ij}, \  \ J_i = \cap_{j \ne i} M^{1}_{ij}.
 $$
 By construction
 $l_i = c$ on $I_i \times J_i$, hence $I_i \times J_i \subset \Omega_i$,  
 and the support of $\pi|_{\Omega_i}$ is contained in $I_i \times J_i$. The proof is complete.
\end{proof}

\begin{remark}
Let $\pi$ be a solution to the Monge--Kantorovich problem with the cost function $c$ satisfying 
assumptions of  Theorem \ref{partit-theorem}
and let $\{ I_i, J_i\}$ be the corresponding $(\mu,\nu)$-partition.
Then every measure with marginals $\mu,\nu$ concentrated 
on $\cup_{i=1}^k I_i \times J_i$ solves the same Monge--Kantorovich problem.

Moreover, if such a measure is concentrated 
on the graph of a mapping $T$, then $T$ is the corresponding optimal transportation.
\end{remark}

Theorem \ref{partit-theorem} shows, in particular, that the transportation problem 
is reduced to a finite-dimensional  problem of finding an optimal partition 
with the constraint  $c=l_i$ on $I_i \times J_i$.
In Theorem \ref{computation-theorem} below we present yet another  equivalent 
finite-dimensional problem, where we relax  the latter constraint on partitions and
replace integrals over minima by minima of certain (easy computable)  integrals. 
This viewpoint might be  useful for computational purposes.

\begin{theorem}
\label{computation-theorem}
Let $l_1, \ldots, l_n$ be  affine functions of the form 
$$
l_j = a_j x + b_j y + c_j
$$
satisfying the assumptions of Theorem \ref{partit-theorem}
and let
$$
c = \min_{ 1 \le i \le n} (l_1, l_2, \ldots,l_n).
$$
For every $(\mu,\nu)$-partition $\mathcal{P}$, we define a functional $J$ in the following way:
$$
J(\mathcal{P}) = \sum_i \min_{j} \Bigl( a_j \int_{I_i} x d \mu + b_j \int_{J_i} y d \nu  + c_j \mu(I_i) \Bigr).
$$
Then the minimal value of the functional
$J$ over  all $(\mu,\nu)$-partitions of order not greater that $n$ coincides with the minimum $K(\mu,\nu)$ 
of the Kantorovich functional for the cost
function $c$.
\end{theorem}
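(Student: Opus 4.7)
The plan is to show the two inequalities $\min_{\mathcal{P}} J(\mathcal{P}) \le K(\mu,\nu)$ and $\min_{\mathcal{P}} J(\mathcal{P}) \ge K(\mu,\nu)$ separately. The common observation driving both directions is that any coupling $\rho$ with marginals $\mu|_{I}$ and $\nu|_{J}$ (where $\mu(I) = \nu(J)$) integrates an affine function $l_j = a_j x + b_j y + c_j$ to the deterministic quantity $a_j \int_{I} x\, d\mu + b_j \int_{J} y\, d\nu + c_j \mu(I)$, depending only on the block marginals and not on $\rho$ itself.

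For the upper bound, I would fix an optimal plan $\pi$ (existence is standard) and invoke Theorem \ref{partit-theorem} to obtain a $(\mu,\nu)$-partition $\mathcal{P}^{*} = \{I_i, J_i\}_{i=1}^{k}$ of some order $k \le n$ with $\pi$ concentrated on $\bigcup_i I_i \times J_i$ and $c \equiv l_i$ on each $I_i \times J_i$. Because $\{I_i\}$ and $\{J_i\}$ are both disjoint families, no mass of $\pi$ can lie in $I_i \times J_j$ for $j \neq i$, so the marginals of $\pi|_{I_i \times J_i}$ coincide with $\mu|_{I_i}$ and $\nu|_{J_i}$. Applying the observation above,
\begin{equation*}
K(\mu,\nu) = \sum_{i=1}^{k} \int_{I_i \times J_i} l_i\, d\pi = \sum_{i=1}^{k} \Bigl( a_i \int_{I_i} x\, d\mu + b_i \int_{J_i} y\, d\nu + c_i \mu(I_i) \Bigr) \ge J(\mathcal{P}^{*}) \ge \min_{\mathcal{P}} J(\mathcal{P}).
\end{equation*}

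For the lower bound, I would start with an arbitrary $(\mu,\nu)$-partition $\mathcal{P} = \{I_i, J_i\}_{i=1}^{k}$ with $k \le n$, pick for each $i$ an index $j_i$ realizing the inner minimum in the definition of $J(\mathcal{P})$, and form $\pi := \sum_i \pi_i$ by selecting any coupling $\pi_i \in \Pi(\mu|_{I_i}, \nu|_{J_i})$ (which exists because $\mu(I_i) = \nu(J_i)$). Since the $I_i$'s and $J_i$'s partition the axes, $\pi \in \Pi(\mu,\nu)$; using $c \le l_{j_i}$ pointwise on $I_i \times J_i$ and again the marginal identity,
\begin{equation*}
K(\mu,\nu) \le \int c\, d\pi \le \sum_{i=1}^{k} \int_{I_i \times J_i} l_{j_i}\, d\pi_i = \sum_{i=1}^{k} \Bigl( a_{j_i} \int_{I_i} x\, d\mu + b_{j_i} \int_{J_i} y\, d\nu + c_{j_i} \mu(I_i) \Bigr) = J(\mathcal{P}).
\end{equation*}
Taking the infimum over $\mathcal{P}$ gives $K(\mu,\nu) \le \min_{\mathcal{P}} J(\mathcal{P})$, and combining the two inequalities completes the proof.

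The argument is essentially a bookkeeping corollary of Theorem \ref{partit-theorem} together with the coupling-independence of affine integrals on a block, so I do not anticipate any substantive obstacle. The one point deserving explicit verification is the marginal identity $\mathrm{Pr}_X(\pi|_{I_i \times J_i}) = \mu|_{I_i}$ in the upper-bound step, which is what links the transport cost of $\pi$ to the linear-in-marginals expression appearing in $J$; all heavy lifting (cyclical monotonicity, the no-crossing rule, and the construction of the partition itself) is already packaged into the previously established theorem.
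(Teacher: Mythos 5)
Your proof is correct and follows essentially the same route as the paper's: the lower bound by constructing a block-diagonal coupling and using $c \le l_{j_i}$ on each block (the paper phrases this via the identity $J(\mathcal{P}) = \sum_i \min_j \int_{I_i \times J_i} l_j \, d\pi$ and pushes the minimum inside the integral), and the upper bound by feeding the partition from Theorem \ref{partit-theorem} into $J$ and using $c = l_i$ on $I_i \times J_i$. The marginal identity you flag is indeed the one point needing a word of justification, and it holds exactly as you indicate because the $I_i$ and $J_i$ are disjoint families covering the axes.
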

\begin{remark}
Note that
\begin{equation}
\label{JP}
J(\mathcal{P}) = \sum_i \min_j \int_{I_i \times J_i} l_j d \pi .
\end{equation}
for every $\pi$ with marginals $\mu,\nu$ and with the support in $\cup_{i=1}^k I_i \times J_i$.
In particular,
$$
J(\mathcal{P}) =   \sum_i \min_{j} \frac{1}{\mu(I_i)} \int_{I_i \times J_i}  l_j \ d  \mu|_{I_i} \times  \nu|_{J_i}.
$$
\end{remark}
\begin{proof}
It follows from representation (\ref{JP}) that
$$
J(\mathcal{P})  \ge \sum_i  \int_{I_i \times J_i} \min_j l_j d \pi  = \int c \ d \pi
$$
for every partition $\mathcal{P}$ and every measure $\pi$ with marginals $\mu, \nu$.
Thus,
$$
J(\mathcal{P})  \ge K(\mu,\nu).
$$
On the other hand, given a solution $\pi$ to the Monge--Kantorovich problem,
one can consider the particular partition $\mathcal{P}_0$ with the properties established in Theorem \ref{partit-theorem}.
Using that $l_i  = c$ on $I_i \times J_i$ for every $i$, one obtains
$$
J(\mathcal{P}_0)
=  \sum_i \min_j   \int_{I_i \times J_i} l_j d \pi = \sum_i    \int_{I_i \times J_i} l_i d \pi =  \sum_i   \int_{I_i \times J_i} c d \pi = \int c d \pi = K(\mu,\nu).
$$
The proof is complete.
\end{proof}

\begin{example}
Let $\mu$ and $\nu$ be the same Lebesgue measure on $[0,1]$.
Consider the set $\Pi$ of couples of partitions $(\mathcal{P}_x, \mathcal{P}_y)$ of $[0,1]$ of the form 
$$
I_i = [t_{i-1},t_i), \ J_i = [s_{i-1},s_i)
$$
 with the property
$$
s_{i}-s_{i-1} =
t_{i}-t_{i-1}.
$$
Then according to Theorem \ref{computation-theorem} the value of the Kantorovich functional equals
$$
 \min_{(\mathcal{P}_x, \mathcal{P}_y) \in \Pi} \sum_i (t_{i}-t_{i-1}) \min_{j}  \Bigl( a_j \frac{t_i + t_{i-1}}{2}+ b_j \frac{s_i + s_{i-1}}{2}  + c_j \Bigr).
$$
\end{example}

Finally, let us make some remarks about the multi-marginal case.
We give below a generalization of our main result for the case when
the number of affine functions coincides with the number of marginals. This covers, in particular, the cost function
$$
c(x_1, x_2, \ldots, x_n)=\min\{x_1, x_2, \ldots, x_n\}.
$$
We omit the proofs because they are completely similar to the case of two marginals.

\begin{remark}
From the description of  solutions to the Monge--Kantorovich problem for this cost function one can conclude 
that the straightforward generalization of Theorem \ref{partit-theorem} fails at least in the following respect: 
the projections of $\mbox{supp}(\pi) \cap \{c=l_i\}$ can have intersections for different $i$.
\end{remark}

\begin{definition}
We say that an $n$-tuple of distinct affine functions of $n$ arguments $l_1, \ldots, l_n$ 
satisfies the non-degeneracy assumption if the set
$$\Gamma_{1, \ldots, n} := \{l_1 = l_2 = \cdots = l_n\}$$ is a straight line 
spanned by an $n$-dimensional vector that has no zero components.
\end{definition}

\begin{definition}
Let $l_1, \ldots, l_n$ be an $n$-tuple of distinct affine functions satisfying the non-degeneracy assumption. 
Suppose that $M = (x_1^0, \ldots, x_n^0)$ is a point from
$$\Gamma_{l_1, \ldots, l_n} := \{l_1 = l_2 = \cdots = l_n\}.$$ 
Let $\EuScript{S}$ be the set $\{\le , \ge\}^n$, i.e., the set of sequences of $n$ symbols $''\le''$ or $''\ge''$.
We shall agree that $-''\le''$ coincides with  $''\ge''$ and $-''\ge''$ coincides with $''\le''$.

 Finally, for any $s \in \EuScript{S}$, let us define $Q_s$ as follows:
\begin{equation*}
Q_s = \bigcup\limits_{i \in \{1, \ldots, n\}} \{x_1 \, s[1] \, x_1^0, x_2 \, s[2] \, x_2^0, \ldots, x_i \, -\!\!s[i] 
\,  x_i^0, \ldots, x_n \, s[n] \, x_n^0\}.
\end{equation*}
\end{definition}

\begin{definition}
Take a directing vector $v$ of $\Gamma_{l_1, \ldots, l_n}$ 
with $v_1 >0$. 
Define $t \in \EuScript{S}$ 
by the following rule: $t[i]=''\ge''$ if $v_i >0$  and $t[i] =''\le''$ if $v_i<0$. 
The set $Q_t$ is further referred to as $Q_{M, \, l_1, \ldots, l_n}$.
\end{definition}

\begin{lemma}\label{l1-ln}
Let $l_1, \ldots, l_n$ be an $n$-tuple of distinct affine functions satisfying 
the non-dege\-ne\-racy assumption and let $\pi$ be a solution to the Monge--Kantorovich problem with $n$ marginals 
and the cost function
\begin{equation*}
c = \min(l_1, \ldots, l_n).
\end{equation*}
Assume, in addition, that every set $\{c=l_i\}$ has a non-empty interior.
Then there exists a point $M \in \Gamma_{1, \ldots, n}$ such that the support of $\pi$ 
is contained in $Q_{M, \, l_1, \ldots, l_n}$.
\end{lemma}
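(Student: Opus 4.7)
The plan is to mirror the proof of Lemma \ref{l12lemma}. First, by reflecting those axes where the directing vector $v$ of $\Gamma_{l_1,\ldots,l_n}$ has $v_i < 0$, rescaling each axis by $|v_i|^{-1}$, translating so that the prospective point $M$ becomes the origin, and subtracting a linear function from $c$, I would reduce to the setting $\Gamma_{l_1,\ldots,l_n} = \mathbb{R}\mathbf{1}$, $t = (\ge,\ldots,\ge)$, $M = 0$, and each $l_k$ linear with $l_k(0) = 0$. In this frame, $Q_{M, l_1, \ldots, l_n} = \bigcup_{i=1}^n O_i$ with $O_i = \{x : x_i \le 0 \text{ and } x_j \ge 0 \text{ for } j \ne i\}$.

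Next, I would fix the position of $M$ on $\Gamma$ by a mass-balance condition. For any $\pi$ supported on $\bigcup_i O_i$, the projection of $O_i$ onto the $i$-th axis equals $(-\infty, 0]$, so $\pi(O_i) = \mu_i((-\infty, 0])$, and summing over $i$ forces $\sum_i \mu_i((-\infty, 0]) = 1$. Since this quantity is a continuous non-decreasing function of the parameter of $M$ on $\Gamma$ taking values between $0$ and $n$, the required $M$ exists. Then I would partition $\mathbb{R}^n$ into orthants $O_I = \{x : x_i < 0 \Leftrightarrow i \in I\}$ and equate $\sum_I \pi(O_I) = 1$ with $\sum_i \mu_i((-\infty, 0]) = \sum_I |I|\,\pi(O_I) = 1$ to obtain $\pi(O_\emptyset) = \sum_{|I|\ge2}(|I|-1)\pi(O_I)$. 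Thus, if $\operatorname{supp}(\pi) \not\subset \bigcup_i O_i$, then both $\pi(O_\emptyset) > 0$ and $\pi(O_{I_0}) > 0$ for some $|I_0| \ge 2$.

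Finally, picking $P \in O_\emptyset \cap \operatorname{supp}(\pi)$ (all $p_i > 0$) and $Q \in O_{I_0} \cap \operatorname{supp}(\pi)$ (with $q_i < 0$ for $i \in I_0$), selecting $i_0 \in I_0$ so that $c(Q) = l_{i_0}(Q)$, and performing the single-coordinate swap $J = \{i_0\}$ to form $P_J, Q_J$, two-point $c$-cyclic monotonicity of $\operatorname{supp}(\pi)$ gives $c(P) + c(Q) \le c(P_J) + c(Q_J)$; I would derive the desired contradiction by showing the reverse strict inequality. In the special case $c = \min(x_1, \ldots, x_n)$ this is immediate: $c(P_J) = q_{i_0}$, $c(Q_J) = \min_{j \ne i_0} q_j$, so $c(P) + c(Q) - c(P_J) - c(Q_J) = \min_j p_j - \min_{j \ne i_0} q_j > 0$ since $\min_j p_j > 0$ and $|I_0| \ge 2$ forces $\min_{j \ne i_0} q_j < 0$. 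The principal technical obstacle will be extending this strict-inequality argument to general affine $l_k$, which is likely to require a more delicate choice of $i_0$ (depending jointly on $P$ and $Q$) and possibly a perturbation of $P, Q$ within open subsets of $\operatorname{supp}(\pi)$ (using atomlessness of the marginals) in order to rule out the degenerate sign configurations that would yield only equality.
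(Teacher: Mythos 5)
Note first that the paper itself gives no proof of Lemma~\ref{l1-ln}; it only asserts that the argument is ``completely similar'' to that of Lemma~\ref{l12lemma}. Your skeleton does faithfully mirror that two-marginal proof: normalize so that $\Gamma_{1,\ldots,n}=\mathbb{R}\mathbf{1}$ and the $l_k$ are linear, locate $M$ by a mass-balance condition, and then contradict two-point $c$-monotonicity of $\operatorname{supp}(\pi)$ by a coordinate swap. The mass-balance and orthant-counting part is correct and is in fact a nice quantitative sharpening of the two-marginal step ``a point in one quadrant forces a point in the opposite quadrant'': the identity $\pi(O_\emptyset)=\sum_{|I|\ge 2}(|I|-1)\pi(O_I)$, valid once $\sum_i\mu_i((-\infty,m_i))=1$, does force the existence of the two offending points $P$ and $Q$ whenever the support escapes $\bigcup_i O_i$.

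The genuine gap is the final step, and you have named it yourself: the strict violation of $c$-monotonicity is only verified for $c=\min(x_1,\ldots,x_n)$, whereas the lemma is stated for arbitrary affine $l_1,\ldots,l_n$ satisfying the non-degeneracy assumption. This is not a routine extension, because the normalization that drives the two-marginal proof is unavailable for $n\ge 3$: for $n=2$ one can subtract a single affine $l$ so that $c-l=\min(a_1x_1,a_2x_2)$, since the one difference $l_1-l_2$ is automatically of the form $a_1x_1-a_2x_2$ when its zero set is a non-axis-parallel line; for $n\ge 3$ the pairwise differences $l_i-l_j$ need not depend on $x_i,x_j$ alone (e.g.\ $l_1=x_1$, $l_2=x_2$, $l_3=x_1+x_2-x_3$ satisfies the non-degeneracy assumption with $\Gamma_{1,2,3}=\mathbb{R}\mathbf{1}$), so $c$ does not reduce to $\min_k(a_kx_k)$ and the quantity $c(P)+c(Q)-c(P_J)-c(Q_J)$ for a single-coordinate swap has no evident sign. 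Two further symptoms that the general case is untreated: the hypothesis that each $\{c=l_i\}$ has non-empty interior --- which appears in Lemma~\ref{l1-ln} but not in Lemma~\ref{l12lemma}, and is vacuous for $c=\min(x_1,\ldots,x_n)$ --- is never used in your argument, and the choice of swap set $J$ (and of which monotonicity inequality to invoke) is left undetermined. As written, the proposal proves the lemma only in the special case needed for the paper's last theorem, not the lemma as stated.
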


Applying Lemma \ref{l1-ln}, we obtain the following result.

\begin{theorem}
Let $\mu_i$, $1 \le i \le n$, be atomless probability measures on the real line.
Define
$$
s= \sup\Bigl\{x\colon \sum_{i=1}^n \mu_i(-\infty,x] \le 1\Bigr\}.
$$
Then the measure $\pi \in \Pi(\mu_1, \ldots, \mu_n)$ solves the Monge--Kantorovich problem with marginals $\mu_i$
and the cost function
$$
c = \min(x_1, \ldots, x_n)
$$
if and only if every point $x \in\mbox{supp}(\pi) \subset \mathbb{R}^n$ satisfies the following conditions:
\begin{itemize}
\item[1)]
if $x_i \le s$, then $x_j \ge s$ for every $ j \ne i$,
\item[2)]
if $x_i \ge s$, then there exists $j \ne i$ such that $x_j \le s$.
\end{itemize}
\end{theorem}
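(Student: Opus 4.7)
The plan is to derive the characterization from Lemma \ref{l1-ln} for the necessity direction and to prove sufficiency by exhibiting explicit Kantorovich dual potentials.

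For necessity, the functions $l_i(x)=x_i$ satisfy the hypotheses of Lemma \ref{l1-ln}: the common level set is the diagonal spanned by $(1,\ldots,1)$, all of whose components are positive, so the sign sequence $t$ consists entirely of ``$\ge$''; each $\{c=l_i\}$ contains the open region where $x_i$ is the strict minimum. The lemma therefore provides a point $M=(m,\ldots,m)\in\Gamma_{1,\ldots,n}$ such that $\mbox{supp}(\pi)\subset\bigcup_{i=1}^n P_i$ with
$$P_i=\{x_i\le m,\ x_j\ge m\text{ for every }j\ne i\}.$$
The key step is to identify $m=s$. Since $P_i\cap P_j\subset\{x_i=x_j=m\}$, atomlessness forces $\pi(P_i\cap P_j)=0$ whenever $i\ne j$; hence $\pi(P_i)=\mu_i((-\infty,m])$ (the set $\{x_i\le m\}\setminus P_i$ lies in $\bigcup_{j\ne i}P_j\cap\{x_i=m\}$, which has $\pi$-measure zero). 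Summing over $i$,
$$\sum_{i=1}^n\mu_i\bigl((-\infty,m]\bigr)=\pi\Bigl(\bigcup_iP_i\Bigr)=1,$$
and since $x\mapsto\sum_i\mu_i((-\infty,x])$ is continuous, this identity pins $m$ to the value $s$ from the statement. Conditions 1) and 2) then follow for $\pi$-almost every $x$: after removing the $\pi$-null set $\bigcup_i\{x_i=s\}$, every remaining point of the support lies in a unique $P_i$ with $x_i<s$ and $x_j>s$ for all $j\ne i$.

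For sufficiency, I would introduce the dual potentials $\phi_i(x)=\min(x-s,0)+s/n$. A short case analysis based on $k:=\#\{i:x_i\le s\}$ shows that $\sum_{i=1}^n\phi_i(x_i)\le\min_i x_i$ holds in all cases: when $k=0$ the sum equals $s\le\min_i x_i$, while for $k\ge 1$ the inequality reduces to $\sum_{x_i\le s}x_i-\min_i x_i\le (k-1)s$, which is a sum of $k-1$ quantities each at most $s$. Equality holds precisely when $k=1$, and under the assumed support conditions this occurs for $\pi$-almost every $x$. Consequently,
$$\int c\,d\pi=\int\sum_i\phi_i(x_i)\,d\pi=\sum_i\int\phi_i\,d\mu_i,$$
while for every $\tilde\pi\in\Pi(\mu_1,\ldots,\mu_n)$ the pointwise inequality gives $\int c\,d\tilde\pi\ge\sum_i\int\phi_i\,d\mu_i$. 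Optimality of $\pi$ follows.

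The step I expect to be the main obstacle is the identification $m=s$, which requires combining the support inclusion from Lemma \ref{l1-ln}, the essentially disjoint decomposition into the sets $P_i$ (exploiting the atomless marginals), and the definition of $s$ as the level where $\sum_i\mu_i((-\infty,\cdot])$ first reaches $1$. Once $m=s$ is secured, both directions reduce to elementary verifications.
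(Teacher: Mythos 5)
Your overall strategy is the right one, and it is in fact the only route the paper indicates: the authors omit the proof entirely, saying only that the result follows by applying Lemma \ref{l1-ln}. Your necessity argument (specializing the lemma to $l_i=x_i$, obtaining $\mathrm{supp}(\pi)\subset\bigcup_i P_i$ with $P_i=\{x_i\le m,\ x_j\ge m\ \forall j\ne i\}$, and locating $m$ through the mass-balance identity $\sum_i\mu_i((-\infty,m])=1$) is exactly what they have in mind, and your dual potentials $\phi_i(x)=\min(x-s,0)+s/n$ give a clean, correct way to close the ``if'' direction that the paper leaves implicit. One small caveat: the identity $\sum_i\mu_i((-\infty,m])=1$ pins $m$ only up to the interval on which the continuous nondecreasing function $x\mapsto\sum_i\mu_i((-\infty,x])$ is flat at height $1$; you should add a line saying that on that interval all the measures $\mu_i$ vanish, so the resulting descriptions of the support agree up to $\pi$-null sets and the conditions at level $s$ still follow almost everywhere.

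The genuine issue is the discrepancy, which your own phrasing already betrays, between what the argument delivers (conditions 1) and 2) for $\pi$-almost every $x$, equivalently $\mathrm{supp}(\pi)\subset\bigcup_i P_i$) and what the theorem literally asserts (conditions 1) and 2) at \emph{every} point of the support). The stronger assertion cannot be proved, because it is false: the set cut out by 1) and 2) is not closed --- its closure is $\bigcup_i P_i$, which strictly contains it on the slices $\{x_j=s\}$ --- while a support is necessarily closed. Concretely, for $n=2$ and $\mu_1=\mu_2$ equal to Lebesgue measure on $[0,1]$ one has $s=1/2$, and the optimal plan induced by $T(t)=t+1/2\ (\mathrm{mod}\ 1)$ contains $(1/2,1)$ in its support; there $x_1=s$, yet no coordinate other than $x_1$ is $\le s$, so condition 2) fails for $i=1$. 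So either the conclusion should be restated as $\mathrm{supp}(\pi)\subset\bigcup_i\{x_i\le s,\ x_j\ge s\ \forall j\ne i\}$ (equivalently, 1) and 2) hold $\pi$-a.e.), in which case your proof is essentially complete, or the literal ``only if'' direction admits a counterexample. Your sufficiency direction is unaffected either way, since it uses the hypothesis only through the inclusion of the support in the equality set of the dual potentials, and that equality set is exactly the closed set $\bigcup_i P_i$.
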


\end{document}